\newtheorem{theorem}{Theorem}
\newtheorem{proposition}[theorem]{Proposition}
\newtheorem{lemma}[theorem]{Lemma}
\theoremstyle{definition}
\newtheorem{example}[theorem]{Example}
\newtheorem*{remark}{Remark}
\newcommand\qbin[3]{\left[ \begin{matrix} #1 \\ #2 \end{matrix}
                   \right]_{\displaystyle #3}}
\numberwithin{theorem}{section}
\numberwithin{equation}{section}
\title{On the $q$-factorization of power series}
\author{
\textbf{Robert Schneider}\\ Department of Mathematical Sciences\\
Michigan Technological University\\
Houghton, Michigan 49931, U.S.A.\\
\texttt{robertsc@mtu.edu} \\ \\
 \textbf{Andrew V. Sills} \\Department of Mathematical Sciences\\
 Georgia Southern University\\
Statesboro, Georgia 30460, U.S.A.\\
\texttt{asills@georgiasouthern.edu}\\ \\
\textbf{Hunter Waldron}\\
Department of Mathematical Sciences\\
Michigan Technological University\\
Houghton, Michigan 49931, U.S.A.\\
\texttt{hpwaldro@mtu.edu} 
}
\date{\today}
\begin{document}

\maketitle

Keywords: {Power series; infinite product; integer partitions}\\
MSC codes: 30B10, 11P81, 40A20

\begin{abstract}
Any power series with unit constant term can be factored into an infinite product of the
form $\prod_{n\geq 1} (1-q^n)^{-a_n}$.   We give direct formulas for the exponents $a_n$ 
in terms of the coefficients of the power series, and vice versa, as sums over partitions.
As examples, we prove identities for certain partition enumeration functions.  Finally, we note $q$-analogues of our enumeration formulas.
\end{abstract}

\begin{center} {\it In honor of George Andrews and Bruce  Berndt for their 85th birthdays}\end{center}

\section*{Statements and Declarations}
The second author is a guest editor for the special issue for which this paper has been
submitted.

\section{Introduction and main results}
Many major results in number theory, analysis, and combinatorics take the form of ``a series equals a product''; in particular, the use of product--sum generating functions is a prevalent method in partition theory. 

In ~\cite[p. 98, Ex. 2]{gea76}, George Andrews considers the factorization of an ordinary power series 
with unit constant term into a $q$-product:
\begin{equation} \label{ps}
  1 + \sum_{n=1}^\infty r(n) q^n = \prod_{n=1}^\infty \frac{1}{(1-q^n)^{a_n}},
\end{equation} where the $r(n)$ and $a_n$ are real numbers.

Andrews suggests a recursion by which the $r(n)$ can be calculated from a given sequence  $a_n$, 
namely
\begin{equation} \label{rgivena}
  n r(n) = \sum_{j=1}^n r(n-j) \sum_{d\mid j} d a_d.
\end{equation}
The reverse problem, where the $r(n)$ are given and the $a_n$ are calculated recursively,
is given by David Bressoud~\cite[p. 61, Ex. 2.3.10]{b99}:
\begin{equation}\label{agivenr}
  n a_n = D_n - \underset{d < n}{\sum_{d \mid n}} d a_d,
\end{equation} 
where \[ D_m = m r(m) -  \sum_{j=1}^{m-1} D_j r(m-j). \]
Bressoud~\cite[Exs. 2.3.8 and 2.3.10 resp.]{b99} provided \emph{Mathematica} code for both~\eqref{rgivena} and~\eqref{agivenr}.
A Maple implementation of ~\eqref{agivenr} is provided by Frank Garvan~\cite{fg20} in his Maple \texttt{qseries.m} package as the
\texttt{prodmake} procedure.  
As Garvan's \texttt{prodmake} is extremely useful in searching for Rogers--Ramanujan
type identities, Shashank Kanade and Matthew Russell used it extensively in their
own work~\cite{kr15}.

A natural question to ask, then, is:  {\it can explicit (non-recursive) formulas be given, to express the sequences $a_n$ and $r(n)$ in terms of one another?} Below we present such explicit formulas in Proposition~\ref{thm1} and Theorem~\ref{thm2}.

First, let us recall   basic definitions and notations associated with partitions.
A \emph{partition} $\lambda$ of an integer $n$ is a weakly decreasing finite 
sequence of positive integers $(\lambda_1, \lambda_2, \dots, \lambda_\ell)$, where
$\lambda_1 \geq \lambda_2 \geq \cdots \geq \lambda_\ell > 0$, that sum to $n$.
Each $\lambda_i$ is called a \emph{part} of the partition $\lambda$.  The
\emph{length} $\ell = \ell(\lambda)$ of a partition $\lambda$ is the number of parts in $\lambda$.
The multiplicity $m_i = m_i(\lambda)$ of $i$ in $\lambda$ is the number of times that $i$ 
appears as a part in $\lambda$.   The notation  $\sum_{\lambda\vdash n}$ means a sum is being taken over all partitions $\lambda$ of
 $n$.

In~\cite{gm54}, G. Meinardus gave an asymptotic formula for the $r(n)$ when the
$a_n$ are all nonnegative real numbers.  
Meinardus' result was subsequently extended by H. Todt~\cite{ht11} 
to all real numbers $a_n$ provided
the $r(n)$ are increasing.   
For recent work on these asymptotics, see Bridges et al.~\cite{bbbf24}.

We now give direct formulas for $r(n)$ as a partition sum
in terms of  the exponents $a_k$, and $a_n$ as a divisor sum containing an inner sum over partitions in terms 
of the coefficients $r(k)$.

\begin{proposition} \label{thm1}
Let $r(n)$ and $a_n$ be defined as in~\eqref{ps}.  Then
\begin{equation} \label{RfromA} r(n) = \sum_{\lambda \vdash n}
  \frac{  (a_1)_{m_1} (a_2)_{m_2} \cdots }
  {m_1!\  m_2! \cdots},
 \end{equation}
 where 
 $(a)_k = (a)(a+1)(a+2)\cdots(a+k-1)$ is the usual rising factorial.  
 \end{proposition}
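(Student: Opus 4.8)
The plan is to expand each factor on the right-hand side of the product in a generalized binomial series and then extract the coefficient of $q^n$. The starting observation is that for a single factor we have the Newton binomial expansion
\begin{equation*}
  \frac{1}{(1-q^k)^{a_k}} = \sum_{m_k \geq 0} \binom{a_k + m_k - 1}{m_k} q^{k m_k}
  = \sum_{m_k \geq 0} \frac{(a_k)_{m_k}}{m_k!}\, q^{k m_k},
\end{equation*}
where $(a_k)_{m_k}$ is the rising factorial as defined in the statement. This identity holds as a formal power series identity (and converges for $|q|<1$), and crucially it is valid for \emph{arbitrary real} exponents $a_k$, since the generalized binomial coefficient $\binom{a_k+m_k-1}{m_k}$ is a polynomial in $a_k$; this is exactly what lets us handle the real (not merely nonnegative integer) exponents allowed in \eqref{ps}.

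Next I would multiply these expansions together over all $k \geq 1$. Formally,
\begin{equation*}
  \prod_{k=1}^\infty \frac{1}{(1-q^k)^{a_k}}
  = \prod_{k=1}^\infty \left( \sum_{m_k \geq 0} \frac{(a_k)_{m_k}}{m_k!}\, q^{k m_k} \right).
\end{equation*}
Extracting the coefficient of $q^n$ from this product amounts to choosing, for each $k$, a nonnegative integer $m_k$ so that the total degree $\sum_k k\, m_k = n$, and then summing the corresponding products of coefficients. The key combinatorial step is to recognize that a choice of multiplicities $(m_1, m_2, m_3, \dots)$ with $m_k \geq 0$ and $\sum_k k\, m_k = n$ is precisely the data of a partition $\lambda \vdash n$, where $m_k = m_k(\lambda)$ is the multiplicity of the part $k$. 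Under this correspondence the contribution of $\lambda$ is exactly $\prod_{k\geq 1} \frac{(a_k)_{m_k}}{m_k!}$, which gives \eqref{RfromA} after matching the left side coefficient $r(n)$ from \eqref{ps}.

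The main obstacle, and the only point requiring genuine care, is justifying the interchange of the infinite product and the summation when collecting the coefficient of $q^n$. I would handle this by noting that only finitely many factors can contribute a positive power of $q$ below degree $n$: any part $k > n$ forces $m_k = 0$, so for the purpose of computing the coefficient of $q^n$ the infinite product truncates to the finite product $\prod_{k=1}^n$, and the $m_k = 0$ terms contribute the factor $(a_k)_0/0! = 1$. Thus no analytic convergence subtleties arise at the level of individual coefficients, and the formal manipulation is rigorous. I would then simply remark that the empty factors (those with $m_k = 0$) are harmlessly omitted from the displayed product in \eqref{RfromA}, so the notation $(a_1)_{m_1}(a_2)_{m_2}\cdots$ is really a finite product determined by the parts actually appearing in $\lambda$.
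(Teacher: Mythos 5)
Your proof is correct and follows essentially the same route as the paper: the paper simply outsources the product-expansion and multiplicity-to-partition correspondence to a cited theorem of Fine (with $\psi_j(q)=(1-q)^{-a_j}$ and $C_j(k)=(a_j)_k/k!$), whereas you prove that special case directly. Your self-contained version, including the observation that only finitely many factors affect any fixed coefficient of $q^n$, is a perfectly valid substitute for the citation.
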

  
  \begin{theorem} \label{thm2}
  Let $r(n)$ and $a_n$ be defined as in~\eqref{ps}.  Then
  \begin{equation}  a_n = \frac{1}{n} \sum_{d\mid n} \mu\left( \frac{n}{d} \right) d
             \sum_{\lambda\vdash d}   
             \frac{ (-1)^{\ell - 1} (\ell-1)! \ r(1)^{m_1}  r(2)^{m_2}\cdots } 
             {  m_1!\  m_2!  \cdots } . 
     \label{AfromR}
  \end{equation}   
  
  \end{theorem}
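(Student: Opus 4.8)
The natural strategy is to take logarithms of the product-sum identity (1.1) and extract the exponents $a_n$ via Möbius inversion, using Proposition 2.1 to handle the logarithm of the series side. Let me sketch the steps.

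First, I would take the logarithm of both sides of the factorization. On the product side, $\log \prod_{n\geq 1}(1-q^n)^{-a_n} = \sum_{n\geq 1} a_n \sum_{k\geq 1} \frac{q^{nk}}{k}$, and collecting the coefficient of $q^m$ gives $\frac{1}{m}\sum_{d\mid m} d\,a_d$. So if I set $L(q) = \log\bigl(1 + \sum_{n\geq 1} r(n)q^n\bigr) = \sum_{m\geq 1} c_m q^m$, then I obtain the clean relation $m\,c_m = \sum_{d\mid m} d\,a_d$, which is exactly the setup for Möbius inversion.

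Second, I would compute $c_m$ explicitly as a partition sum. Applying the standard power-series expansion $\log(1+x) = \sum_{\ell\geq 1} \frac{(-1)^{\ell-1}}{\ell} x^\ell$ with $x = \sum_{n\geq 1} r(n)q^n$, and then expanding $x^\ell$ by the multinomial theorem, the coefficient of $q^m$ becomes a sum over compositions that regroups into a sum over partitions $\lambda\vdash m$ of length $\ell$. The multinomial coefficient $\binom{\ell}{m_1,m_2,\dots}=\ell!/(m_1!m_2!\cdots)$ combines with the $\frac{1}{\ell}$ to produce $(\ell-1)!/(m_1!m_2!\cdots)$, yielding
\[ c_m = \sum_{\lambda\vdash m} \frac{(-1)^{\ell-1}(\ell-1)!\; r(1)^{m_1} r(2)^{m_2}\cdots}{m_1!\, m_2!\,\cdots}. \]
This is precisely the inner partition sum appearing in (1.3), now identified as $c_m$. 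The main bookkeeping obstacle is verifying that the multinomial expansion of the power series correctly reindexes as a sum over partitions with the stated multiplicity factors; I expect this to be routine but requires care tracking which compositions correspond to a given partition.

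Finally, from $m\,c_m = \sum_{d\mid m} d\,a_d$, Möbius inversion on the divisor lattice gives $n\,a_n = \sum_{d\mid n}\mu(n/d)\, d\,c_d$, hence $a_n = \frac{1}{n}\sum_{d\mid n}\mu(n/d)\, d\,c_d$. Substituting the partition-sum expression for $c_d$ yields exactly (1.3). The whole argument hinges on two standard facts—the logarithmic derivative of $(1-q^n)^{-a_n}$ and Möbius inversion—so the only genuine work is the combinatorial identification of $c_m$ as the claimed partition sum; everything else is formal. As an alternative, one could derive the relation $m\,c_m = \sum_{d\mid m} d\,a_d$ directly from Andrews' recursion (1.2) by recognizing that $\sum_{d\mid j} d\,a_d$ is the $j$-th logarithmic-derivative coefficient, but taking logarithms from the start is cleaner.
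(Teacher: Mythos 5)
Your argument is correct, and although it shares the paper's overall skeleton---identify the coefficients of $\log f(q)$, relate them to the $a_n$ through the divisor sum $m\,c_m=\sum_{d\mid m}d\,a_d$, and finish with M\"obius inversion---the key step is carried out by a genuinely different and more self-contained route. The paper never expands $\log(1+x)$ directly: it instead computes $\frac{d}{dq}\log f(q)=f'(q)/f(q)$, quotes an earlier result of \cite{ss22} for the power-series coefficients of $1/f(q)$ (a partition sum weighted by $(-1)^{\ell}\ell!$), and then needs a separate combinatorial identity (Lemma~\ref{lem}) to show that the resulting Cauchy-product convolution collapses to $n$ times the partition sum weighted by $(-1)^{\ell-1}(\ell-1)!$. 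You obtain the log coefficients $c_m$ in one stroke from $\log(1+x)=\sum_{\ell\geq 1}(-1)^{\ell-1}x^{\ell}/\ell$ and the multinomial theorem: the $\ell!/(m_1!\,m_2!\cdots)$ compositions refining a given partition of length $\ell$ absorb the $1/\ell$ to give exactly the weight $(\ell-1)!/(m_1!\,m_2!\cdots)$ appearing in \eqref{AfromR}. This avoids both the external reference and Lemma~\ref{lem}, at the cost of the composition-to-partition bookkeeping you flag (which is indeed routine); the paper's route, by contrast, produces Lemma~\ref{lem} as a reusable tool that the authors exploit again in the applications, e.g.\ in establishing \eqref{simp2d} and \eqref{Leq}. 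Both arguments are purely formal in the ring of power series, and the M\"obius-inversion endgame is identical, so your proposal is a valid and arguably cleaner proof of the theorem.
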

  
  \section{Proof of Proposition~\ref{thm1}}
  Our Proposition~\ref{thm1} follows as an application of the following theorem
  of N.~J. Fine~\cite[\S22, Theorem 1]{f88}, rewritten to suit our current notation.
  \begin{theorem}[Fine]
  Let $\psi_j(q) = \sum_{k\geq 0} C_j(k) q^k,\  (j = 1,2,\dots). $ Then
  \[ \prod_{j\geq 1} \psi_j(q^j) = \sum_{n\geq 0} q^n \sum_{\lambda\vdash n}
  C_1(m_1(\lambda) ) C_2(m_2(\lambda)) \cdots .\]
  \end{theorem}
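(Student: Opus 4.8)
The plan is to read the statement as an identity of formal power series in $q$ and to prove it by expanding the infinite product and extracting the coefficient of each power $q^n$. First I would substitute $q^j$ for $q$ in each factor, so that $\psi_j(q^j) = \sum_{k\geq 0} C_j(k)\, q^{jk}$, and then expand the product $\prod_{j\geq 1} \psi_j(q^j)$ by choosing, independently for each $j$, a nonnegative integer $k_j$ as the summation index in the $j$th factor. Each such choice of a sequence $(k_1, k_2, \dots)$ contributes the monomial $\bigl(\prod_{j\geq 1} C_j(k_j)\bigr)\, q^{\sum_{j} j k_j}$ to the product, so that the entire product is the sum of these monomials over all admissible sequences.

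Next I would collect these contributions by total degree. The coefficient of $q^n$ is the sum, over all sequences $(k_1, k_2, \dots)$ of nonnegative integers satisfying $\sum_{j\geq 1} j k_j = n$, of the products $\prod_{j\geq 1} C_j(k_j)$. The key observation is that such sequences are in bijection with the partitions of $n$: given $\lambda \vdash n$, set $k_j = m_j(\lambda)$, the multiplicity of $j$ in $\lambda$, and the relation $n = \sum_j j\, m_j(\lambda)$ is precisely the assertion that $\lambda$ is a partition of $n$; conversely, any admissible sequence $(k_j)$ determines a unique partition having exactly $k_j$ parts equal to $j$. Under this correspondence the coefficient of $q^n$ becomes $\sum_{\lambda\vdash n} C_1(m_1(\lambda))\, C_2(m_2(\lambda))\cdots$, which is exactly the inner sum on the right-hand side.

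The one point requiring care, and the main (if minor) obstacle, is justifying that the infinite product defines a genuine formal power series, so that the coefficient extraction above is legitimate and yields a finite sum in each degree. This rests on the standing assumption of the paper that each $\psi_j$ has unit constant term, $C_j(0) = 1$ (as holds in the application, where $\psi_j(q) = (1-q)^{-a_j}$). With this normalization, any factor with index $j > n$ can contribute only its constant term $1$ to the coefficient of $q^n$, since a positive choice $k_j \geq 1$ would already push the degree above $n$; hence for each fixed $n$ only the finitely many factors with $j \leq n$ are relevant, every admissible sequence $(k_j)$ has all but finitely many entries zero, and both the product $\prod_{j\geq 1} C_j(m_j(\lambda))$ and the coefficient of $q^n$ reduce to finite expressions. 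Assembling the coefficients over all $n \geq 0$ then completes the identification of the two sides.
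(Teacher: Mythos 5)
Your proof is correct, but there is nothing in the paper to compare it against: the paper does not prove this statement at all. It is quoted (adapted to the paper's notation) from Fine's book \cite[\S22, Theorem 1]{f88}, and Section 2 of the paper merely applies it with $\psi_j(q) := (1-q)^{-a_j}$ to deduce Proposition~\ref{thm1}. What you have written is the standard argument that Fine's result rests on: expand each factor as $\psi_j(q^j) = \sum_{k_j \geq 0} C_j(k_j)\,q^{jk_j}$, distribute the product over the sums, collect by total degree, and identify the admissible multi-indices $(k_j)$ with $\sum_j j k_j = n$ with partitions $\lambda \vdash n$ via $k_j = m_j(\lambda)$. Your handling of well-definedness is a genuine point rather than pedantry: as stated, the theorem carries no hypotheses, yet both sides involve infinite products --- $\prod_{j\geq 1}\psi_j(q^j)$ on the left, and $C_1(m_1)C_2(m_2)\cdots$ with almost all $m_j = 0$ on the right --- which are meaningless as formal power series unless $C_j(0) = 1$ for all but finitely many $j$ (or unless one imposes analytic convergence conditions, as Fine does in his own formulation). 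Your normalization $C_j(0) = 1$ is exactly what holds in the paper's application, where $\psi_j$ is a binomial series with unit constant term, so the deduction of Proposition~\ref{thm1} is unaffected; in effect you have supplied the proof the paper delegates to the citation.
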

  
  All we need to do is set Fine's $\psi_j(q) := (1-q)^{-a_j}$, then 
we must have \[ C_j(k) = \frac{(a_j)_k}{k!}, \] by the Maclaurin series 
expansion of the the binomial series.   Proposition~\ref{thm1} follows immediately. \qed

\section{Some applications of Proposition~\ref{thm1}}
In this section, we present applications of Proposition \ref{thm1} to give identities for certain partition enumeration functions. 

\begin{example}
A canonical special case of~\eqref{ps} is where all $a_n = 1$, and thus $r(n) = p(n)$, the 
number of partitions of $n$.  Then~\eqref{RfromA} reduces to the trivial statement
\[p(n) = \sum_{\lambda\vdash n} 1, \] as $(1)_m = m!$.  In other words, $p(n)$ is obtained by
literally counting all partitions of $n$, one by one.  
\end{example}

\begin{example}
  Let $p_S(n)$ denote the number of partitions of $n$ in which all parts are in some subset $S\subseteq \mathbb Z^+$ of
the positive integers.
Observe that
\begin{equation} 
\sum_{n=0}^\infty p_S(n) q^n = \prod_{n\in S} \frac{1}{1-q^n}.
\end{equation}
Upon noting $(0)_0 = 1$ (the empty product) and $(0)_m = 1$ when $m$ is a positive integer,
we see that 
\begin{equation} \label{restptn}
 p_S(n) =  \sum_{\lambda\vdash n} \chi( \mbox{ all parts of $\lambda$ are in $S$ }  ),
 \end{equation}
where the characteristic function $\chi(A) = 1$ if $A$ is true and $0$ if $A$ is false.
So~\eqref{restptn} is just a brute force count of allowable partitions of size $n$ as one runs through the
unrestricted partitions of $n$.
\end{example}

\begin{example}
Recall that if $\overline{p}(n)$ denotes the number of overpartitions~\cite{cl04} of $n$, then
\begin{equation} \label{OPgf}
\sum_{n=0}^\infty \overline{p}(n) q^n = \prod_{n=1}^\infty \frac{1+q^n}{1-q^n} = 
\prod_{n=1}^\infty \frac{1}{(1-q^n)^{a_n}}
\end{equation}
where 
\[ a_n = 
\begin{cases}  
  1 &\mbox{if $n$ is even} \\
  2 &\mbox{if $n$ is odd}
\end{cases} .
\]
Thus we conclude that
\begin{equation}
  \overline{p}(n) =  \sum_{\lambda\vdash n} (m_1 + 1) (m_3 + 1) (m_5 + 1) \cdots .
\end{equation}
\end{example}

\begin{example}  Fix a positive integer $k$.
If $p_k(n)$ denotes the number of $k$-color partitions of $n$, then 
\begin{equation} \label{kColorGF}
\sum_{n=0}^\infty p_k(n) q^n =\prod_{n=1}^\infty \frac{1}{(1-q^n)^{k}}.
\end{equation}
  Thus we conclude that
\begin{equation}
p_k(n) =  \sum_{\lambda\vdash n} \frac{(k)_{m_1} (k)_{m_2} \cdots}
{m_1!\ m_2!  \cdots}  =  \sum_{\lambda\vdash n} \binom{k+m_1 - 1}{k-1}
\binom{k+m_2 - 1}{k-1}  \binom{k+m_3 - 1}{k-1} \cdots.
\end{equation}
\end{example}

\begin{example}
If $PL(n)$ denotes the number of plane partitions of $n$, then 
\begin{equation} \label{PLGF}
\sum_{n=0}^\infty PL(n) q^n =\prod_{n=1}^\infty \frac{1}{(1-q^n)^{n}}.
\end{equation}
  Thus we conclude that
\begin{equation}
PL(n) =  \sum_{\lambda\vdash n} \frac{(1)_{m_1} (2)_{m_2} (3)_{m_3} \cdots}
{m_1!\ m_2! \ m_3! \cdots} 
=  \sum_{\lambda\vdash n} \binom{m_2+1}{1} \binom{m_3+2}{2} \binom{m_4+3}{3} \cdots.
\end{equation}
\end{example}

\begin{example}
The $k$-broken diamond partitions
are defined by Andrews and P. Paule in~\cite{ap07}.   If $r_k(n)$ denotes the number of
$k$-broken diamond partitions of $n$, then
\[ \sum_{n=0}^\infty r_k(n) q^n = \prod_{n=1}^\infty \frac{1}{(1-q^n)^{a_n}}, \]
where
\[ a_n = 
\begin{cases}  
  2 &\mbox{if $n$ is even or $n\equiv (2k+1)\pmod{4k+2}$} \\
  3 &\mbox{otherwise}
\end{cases} .
\]
Thus
\begin{equation}
r_k(n) 
=\sum_{\lambda\vdash n}
\prod_{j=0}^\infty  \frac{ (2+m_{2j+1}) (1+m_{2j+2})  (1+m_{(4k+2)j+(2k+1)}) }
 {   {(2+m_{(4k+2)j+(2k+1)})}  } .
\end{equation}

\begin{remark} Further results along similar lines are noted in Appendix D of \cite{Schneider_PhD}.\end{remark}
\end{example}
  
  \section{Proof of Theorem~\ref{thm2}}
  As before, let $r(n)$ and $a_n$ be defined by~\eqref{ps} and let $b(n)$ be defined by
  \begin{equation}  \label{seriesBproductA}
      \sum_{n=1}^\infty b(n) q^n = \log \prod_{n=1}^\infty \frac{1}{(1-q^n)^{a_n}}.
  \end{equation}
If we differentiate~\eqref{seriesBproductA} with respect to $q$ and equate coefficients of
$q$, we observe that
\begin{equation}
     n b( n ) = \sum_{d\mid n} d  a_d,
\end{equation}  and thus by M\"obius inversion, we have
\begin{equation}
\label{AfromB}
   n a_n = \sum_{d\mid n} \mu\left( \frac nd\right) d\,b(d) .
\end{equation}
 Next, we need to prove a lemma.

\begin{lemma} We have that \label{lem}
\begin{multline}
n \sum_{\lambda\vdash n} \frac{ (-1)^{\ell(\lambda)-1} (\ell(\lambda)- 1)!  \ 
r(1)^{m_1(\lambda)} r(2)^{m_2(\lambda)} \cdots  }
{ m_1(\lambda)!  m_2(\lambda)!  \cdots } \\
= \sum_{j=1}^n j r(j) \sum_{\mu\vdash n-j} \frac{ (-1)^{\ell(\mu)} 
\ell(\mu)! \ r(1)^{ m_1(\mu)} r(2)^{ m_2(\mu)} \cdots }
{ m_1(\mu)! m_2(\mu)! \cdots  }.
\end{multline}
\end{lemma}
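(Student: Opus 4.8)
The plan is to recognize both sides of the claimed identity as two expansions of the single quantity $n\,b(n)$, where $b(n)$ is the logarithmic coefficient introduced in~\eqref{seriesBproductA}. Throughout I write $R(q) = 1 + \sum_{k\geq 1} r(k) q^k$ for the power series of~\eqref{ps} and $B(q) := \sum_{n\geq 1} b(n) q^n = \log R(q)$, and I treat all series as formal, so no convergence questions arise. The engine driving everything is the logarithmic derivative $R'(q) = R(q)\,B'(q)$, together with the multinomial description of powers of a power series.

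First I would identify the partition sum on the left-hand side (with the factor $n$ stripped off) as $b(n)$ itself. Substituting the Maclaurin series $\log(1+x) = \sum_{\ell\geq 1} \tfrac{(-1)^{\ell-1}}{\ell} x^\ell$ into $\log R(q)$ with $x = \sum_{k\geq 1} r(k) q^k$ and expanding the $\ell$-th power by the multinomial theorem, the contribution to $[q^n]$ from a fixed underlying partition $\lambda\vdash n$ of length $\ell$ is weighted by the number of orderings of its parts, namely $\ell!/(m_1(\lambda)!\,m_2(\lambda)!\cdots)$. Multiplying by $\tfrac{(-1)^{\ell-1}}{\ell}$ and using $\ell!/\ell = (\ell-1)!$ reproduces exactly the summand on the left. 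Hence the left-hand side equals $n\,b(n)$.

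Next I would identify the inner partition sum on the right-hand side with the coefficient $[q^{n-j}]\,R(q)^{-1}$. Here I substitute instead $\tfrac{1}{1+x} = \sum_{\ell\geq 0} (-1)^\ell x^\ell$ with the same $x$; the multinomial expansion now yields the summand carrying the sign $(-1)^{\ell(\mu)}$ and weight $\ell(\mu)!/(m_1(\mu)!\,m_2(\mu)!\cdots)$ over partitions $\mu\vdash n-j$, where the $\ell=0$ term correctly supplies the value $1$ in the boundary case $j=n$. Since $\sum_{j\geq 1} j\,r(j)\,q^j = q\,R'(q)$, the Cauchy product shows that the right-hand side is precisely $[q^n]\bigl(q\,R'(q)\cdot R(q)^{-1}\bigr)$.

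Finally the two computations meet: because $B(q) = \log R(q)$, we have $q\,R'(q)\cdot R(q)^{-1} = q\,B'(q) = \sum_{n\geq 1} n\,b(n)\,q^n$, so the right-hand side also equals $n\,b(n)$, matching the left and completing the proof. I expect the only delicate point to be the bookkeeping inside the two multinomial expansions—tracking the signs, the factorials, and the passage from ordered compositions to unordered partitions via the factor $1/\prod_i m_i!$—while the analytic content reduces to the single, elementary identity $R' = R\,B'$.
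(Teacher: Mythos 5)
Your proof is correct, but it takes a genuinely different route from the paper's. The paper proves the lemma by a purely combinatorial, term-by-term manipulation: it starts from the elementary identity $n=\sum_{j} j\,m_j(\lambda)$, rewrites $m_j$ as $m_j!/(m_j-1)!$, multiplies through by $(-1)^{\ell-1}(\ell-1)!\,r(1)^{m_1}r(2)^{m_2}\cdots/\prod_i m_i!$, and sums over $\lambda\vdash n$; the right-hand side then emerges from the bijection that deletes one part $j$ from $\lambda$ to produce $\mu\vdash n-j$ with $\ell(\mu)=\ell(\lambda)-1$, which converts $(-1)^{\ell(\lambda)-1}(\ell(\lambda)-1)!$ into $(-1)^{\ell(\mu)}\ell(\mu)!$ and adjusts the multiplicities accordingly. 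No generating functions or logarithms appear. You instead identify the left side as $n\,b(n)$ by expanding $\log R(q)$ via the multinomial theorem, identify the inner sum on the right as $[q^{n-j}]R(q)^{-1}$ (which is Lemma 1 of the cited paper of Schneider and Sills), and close the loop with $qR'/R=qB'$. Your bookkeeping of signs, factorials, and the composition-to-partition factor $\ell!/\prod_i m_i!$ is right, including the $j=n$ boundary term. What each approach buys: the paper's argument is self-contained and isolates exactly the combinatorial link needed to finish its proof of Theorem~\ref{thm2}, where the lemma justifies the last equality in the chain following logarithmic differentiation; your argument shows the lemma is equivalent to the identity $R'=RB'$ itself, and as a byproduct it reveals that the lemma could be bypassed entirely in proving Theorem~\ref{thm2}, since your expansion of $\log R(q)$ yields $b(n)$ as the desired partition sum directly. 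The one ingredient you use that the paper does not state explicitly is that multinomial expansion of $\log(1+x)$, so in a submitted version you should either prove that expansion or cite it; it is standard, so this is a presentational point rather than a gap.
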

\begin{proof}
Let $\lambda$ be an arbitrary partition of size $n$.
Thus we have \begin{align*}
  n  &= \sum_{j=1}^n j m_j(\lambda)\\
      & = \sum_{j=1}^n j  \frac{m_j(\lambda)!}{(m_j(\lambda) - 1)!}
      \mbox{  (where we follow the convention that $\frac{0!}{(-1)!} := 0$)}
      \\
      & = \sum_{j=1}^n j \frac
      {m_1(\lambda)! \cdots m_{j-1}(\lambda)! m_j(\lambda)! m_{j+1}(\lambda)! \cdots
       m_n(\lambda)! }
       { m_1(\lambda)! \cdots m_{j-1}(\lambda)! (m_j(\lambda)-1)! m_{j+1}(\lambda)! \cdots
       m_n(\lambda)! } , 
\end{align*}
which implies
\[  \frac{ n(-1)^{\ell(\lambda)-1} (\ell(\lambda) - 1)! }{ m_1(\lambda)! \cdots m_n(\lambda)!}
\\ = \sum_{j=1}^n j
 \frac{ (-1)^{m_1(\lambda) + \cdots m_n(\lambda) - 1} (m_1(\lambda) + \cdots m_n(\lambda)-1)!  } { m_1(\lambda)! \cdots m_{j-1}(\lambda)! (m_j(\lambda)-1)! m_{j+1}(\lambda)! \cdots
       m_n(\lambda)!   }.
 \]
Upon multiplying both sides by $r(1)^{m_1(\lambda)}  r(2)^{m_2(\lambda)}  \cdots $
and then summing over all partitions of $n$, the result follows.
\end{proof}

Now, we have the ingredients in place to prove Theorem~\ref{thm2}. Letting $f(q) := 1 + \sum_{n=1}^\infty r(n) q^n = \prod_{n=1}^\infty (1-q^n)^{-a_n}$,
and performing logarithmic differentiation, we obtain
\[    \frac{d}{dq} \log f(q) = \frac{df}{dq} \cdot \frac{1}{f(q)}. \]
Writing each expression as a power series (see \cite[p. 3, Lemma 1]{ss22} for
the power series representation of $1/f(q)$), we obtain
\begin{align*}
\sum_{n=1}^\infty nb(n)q^{n-1} &=
 \left( \sum_{j=1}^\infty j r(j) q^{j-1} \right)
 \left(\sum_{k=0}^\infty \left[ \sum_{\lambda\vdash k }
 \frac{(-1)^\ell \ell!\ r(1)^{m_1} \cdots r(k)^{m_k}}{m_1! \cdots m_k!} \right] q^k \right)\\
 &= 
 \sum_{n=1}^\infty q^{n-1}
 \sum_{j=1}^n j r(j) \sum_{\mu\vdash n-j} \frac{ (-1)^{\ell(\mu)} 
\ell(\mu)! \ r(1)^{ m_1(\mu)} \cdots r(n-j)^{m_{n-j}(\mu)} }
{ m_1(\mu)! \cdots m_{n-j}(\mu)! }\\
&= \sum_{n=1}^\infty q^{n-1} n \sum_{\lambda\vdash n} \frac{ (-1)^{\ell(\lambda)-1} (\ell(\lambda) - 1)!  \ 
r(1)^{m_1(\lambda)} \cdots r(n)^{m_n(\lambda)} }
{ m_1(\lambda)! \cdots m_n(\lambda)! },
\end{align*}
where the last equality was proved as Lemma~\ref{lem}.

By equating powers of $q$ in the extremes, we now have $b(n)$ in terms of
$r(1), r(2), \dots, r(n)$.  Finally, apply~\eqref{AfromB}, to obtain Theorem~\ref{thm2}.

\section{Some Applications of Theorem~\ref{thm2}}
\begin{example}
Let $c(n)$ denote the number of compositions of $n$.  It is well known that 
\[ c(n) = 
\begin{cases}  
  1 &\mbox{if $n=0$} \\
  2^{n-1} &\mbox{if $n>0$}
\end{cases} ,
\]
and that 
\[ \sum_{n=0}^\infty c(n) q^n = \frac{1-q}{1-2q}, \]
but as a consequence of Theorem~\ref{thm2}, we further conclude that
\[ \sum_{n=0}^\infty c(n) q^n = \prod_{n=1}^\infty \frac{1}{(1-q^n)^{a_n}}, \] where
\begin{equation} \label{ancomps}
 a_n = \frac{1}{n} \sum_{d\mid n} \mu\left( \frac{n}{d} \right) (2^d - 1), 
 \end{equation}
i.e., the sequence $\{ a_n\}_{n=1}^\infty = (1,1,2,3,6,9,18,30,56,99,186,335, 630, \dots)$, which is OEIS sequence
A059966~\cite{oeisA059966}.   
OEIS indicates that the $a_n$ are the Lie analog of the sequence of partition numbers,
which gives the dimensions of the homogeneous polynomials with one generator in
each degree, and several other interpretations.

To see that~\eqref{ancomps} holds, apply Theorem~\ref{thm2} with $r(n) = c(n) = 2^{n-1}$ if
$n>0$ and $r(0) = c(0) = 1$.  Thus we need to show that
\begin{equation} \label{simp2d}
 d\sum_{\lambda\vdash d} \frac{(-1)^{\ell - 1} (\ell - 1)!  (2^0)^{m_1} (2^1)^{m_2} (2^2)^{m_3} \cdots}
 {m_1! \ m_2! \ m_3! \cdots } = 2^d - 1.
 \end{equation}
Apply Lemma~\ref{lem} with $n = d$ to the left side of~\eqref{simp2d}:
 \begin{align}
&  \phantom{ = }d\sum_{\lambda\vdash d} \frac{(-1)^{\ell - 1} (\ell - 1)!  (2^0)^{m_1} (2^1)^{m_2} (2^2)^{m_3} \cdots}
 {m_1! \ m_2! \ m_3! \cdots } \notag \\ & =
 \sum_{j=1}^d j \ 2^{j-1} \sum_{\mu \vdash d-j} \frac{(-1)^\ell \ \ell! \ (2^0)^{m_1} 
 (2^1)^{m_2} (2^2)^{m_3} \cdots}{m_1! \ m_2! \ m_3! \cdots } \notag \\
 & = \sum_{j=1}^d j \ 2^{j-1} \cdot \mbox{ coeff of } q^{d-j} \mbox{ in } \frac{1}{1+2q+4q^2+8q^3
 +\cdots} \label{recipr} \\
 & = \sum_{j=1}^d j \ 2^{j-1} \cdot \mbox{ coeff of } q^{d-j} \mbox{ in } \frac{1-2q}{1-q} \notag \\
 & = \sum_{j=1}^d j \ 2^{j-1} \cdot \mbox{ coeff of } q^{d-j} \mbox{ in } 1 -q -q^2 -q^3 - q^4 - \cdots
 \notag\\
 & = d\ 2^{d-1} - \sum_{j=1}^{d-1} j\ 2^{j - 1} \notag \\
 & = 2^d - 1, \notag
 \end{align}
 where \eqref{recipr} follows from the preceding line by~\cite[p. 3, Lemma 1]{ss22}, and
the last equality follows by induction on $d$.

\end{example}

\begin{example}
Let $F_n$ denote the $n$th Fibonacci number, where 
\[ F_1 = F_2 = 1, \mbox{  and } F_n = F_{n-1} + F_{n-2} \mbox{  for $n>2$ }.\]
It is well known that 
\[ \sum_{n=0}^\infty F_{n+1} q^n = \frac{1}{1-q-q^2}, \]
but as a consequence of Theorem~\ref{thm2}, we also see that
\[ \sum_{n=0}^\infty F_{n+1} q^n  = \prod_{n=1}^\infty \frac{1}{(1-q^n)^{a_n}}, \] where
\begin{equation} \label{fibexponents} 
a_n = \frac{1}{n} \sum_{d\mid n} \mu\left( \frac{n}{d} \right) L_{d}, 
\end{equation}
where $L_1 = 1,  L_2 = 3,  L_{n} = L_{n-1} + L_{n-2}$ for $n>2$ (the Lucas numbers).
The sequence $\{ a_n\}_{n=1}^\infty = 
(1, 1, 1, 1, 2, 2, 4, 5, 8, 11, 18, 25, 40, 58, 90, 135,$ $210, \dots)$ is 
OEIS sequence
A006206~\cite{oeisA006206}.

To see that~\eqref{fibexponents} holds, note that the $d=1$ and $d=2$ cases can be
easily verified by direct computation.  For $d>2$, we start by applying Theorem~\ref{thm2} with
$r(n) = F_{n+1}$, and observe that we need to show that
\begin{equation} \label{Leq}
d \sum_{\lambda\vdash d} \frac{(-1)^{\ell -1} (\ell - 1)! \ F_2^{m_1} F_3^{m_2} F_4^{m_3} \cdots }
{m_1! \ m_2! \ m_3! \cdots } = L_d.
\end{equation}

Apply Lemma~\ref{lem} with $n = d$ to the left side of~\eqref{Leq}:
 \begin{align}
&  \phantom{ = }d\sum_{\lambda\vdash d} \frac{(-1)^{\ell - 1} (\ell - 1)!  F_2^{m_1} F_3^{m_2} F_4^{m_3} \cdots}
 {m_1! \ m_2! \ m_3! \cdots } \notag \\ & =
 \sum_{j=1}^d j \ F_{j+1} \sum_{\mu \vdash d-j} \frac{(-1)^\ell \ \ell! \ F_2^{m_1} 
 F_3^{m_2} F_4^{m_3} \cdots}{m_1! \ m_2! \ m_3! \cdots } \notag \\
 & = \sum_{j=1}^d j \ F_{j+1} \cdot \mbox{ coeff of } q^{d-j} \mbox{ in }\frac{1}
 {F_1 + F_2 q + F_3 q^2 + F_4 q^3 + \cdots}  \label{recipr2} \\
 & = \sum_{j=1}^d j \ F_{j+1} \cdot \mbox{ coeff of } q^{d-j} \mbox{ in } (1-q-q^2) \notag \\
 & = \sum_{j=1}^d j \ F_{j+1} \cdot \begin{cases}  
  1 &\mbox{if $j = d$ }, \\
  -1 &\mbox{if $j =d-1$ or $d-2$, }\\
  0  &\mbox{otherwise; }
\end{cases}
 \notag\\
 & = -(d-2)F_{d-1} - (d-1)F_d + d F_{d+1}\notag \\
 & = -F_d - (d-2)(F_{d-1} + F_d) + 2 F_{d+1} + (d-2)F_{d+1} \notag \\
 & = -F_d - (d-2)F_{d+1} + 2F_{d+1} + (d-2)F_{d+1} \notag \\
 & = -F_d + F_{d+1} + F_{d+1} \notag \\
 & = F_{d-1} + F_{d+1} \notag \\
 & = L_d, \notag
 \end{align}
where the equality at line \eqref{recipr2} follows by~\cite[p. 3, Lemma 1]{ss22}.

\end{example}

\section{A $q$-analogue of $r(n)$}

As a final observation, we note that the expressions $(a_i)_{m_{i}}/{m_i!}$ appearing in Proposition \ref{thm1} have a natural $q$-analogue
$$ \lim_{q \to 1} \qbin{a_i - 1 + m_i}{m_i}{q} = \frac{(a_i)_{m_i}}{m_i!},$$
so long as $a_i\geq 0$, where
$$ \qbin{n}{k}{q} = \frac{(q;q)_n}{(q;q)_k(q;q)_{n-k}} $$
is the Gaussian binomial coefficient, $0\leq k\leq n$. Then for $|q|<1$, we may define $r_q(0):=1$ and, for $n\geq 1$, 
$$ r_q(n) := \sum_{\lambda \vdash n} \qbin{a_1 - 1 + m_1}{m_1}{q} \qbin{a_2 - 1 + m_2}{m_2}{q} \cdots $$
to yield a $q$-analogue of $r(n)$, satisfying $\lim_{q \to 1} r_q(n) = r(n)$. 

The generating function of $r_q(n)$
may be obtained by attaching $z^n$ to each side of this equation, and summing over all $n \geq 0$. This yields
a sum now over  all the partitions on the right-hand side:
\begin{equation}
   \sum_{n =0}^\infty r_q(n) z^n = \sum_{\lambda\in\mathcal P} z^{\displaystyle \lvert \lambda \rvert}
   \qbin{a_1 - 1 + m_1}{m_1}{q} \qbin{a_2 - 1 + m_2}{m_2}{q} \cdots.
\label{gf_identity_sum_over_partitions}
\end{equation}
This identity leads us to a product generating formula for $r_q(n)$.. 

\begin{theorem}\label{analogue} If $r(n)$ and $a_n$ are defined as in equation~\eqref{ps} such that $a_n\geq 0$ is satisfied for all $n\geq 0$, then we have 
$$ \sum_{n =0}^\infty r_q(n) z^n = \prod_{n =1}^\infty \frac{1}{(z^n; q)_{a_n}}. $$
\end{theorem}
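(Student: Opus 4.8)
The plan is to begin from the series identity \eqref{gf_identity_sum_over_partitions}, which already expresses $\sum_{n\geq 0} r_q(n)z^n$ as a single sum over the set $\mathcal P$ of all partitions, and to turn that partition sum into the claimed infinite product by the standard multiplicity-indexing (Euler-type) argument. The key observation is that a partition $\lambda\in\mathcal P$ is specified uniquely by its multiplicity sequence $(m_1,m_2,m_3,\dots)$, where each $m_j\in\mathbb Z_{\geq 0}$ and all but finitely many vanish, with $\lvert\lambda\rvert=\sum_{j\geq 1} j\,m_j$. Under this reindexing the summand factors across the $m_j$, so that
$$ \sum_{\lambda\in\mathcal P} z^{\lvert\lambda\rvert}\prod_{j\geq 1}\qbin{a_j-1+m_j}{m_j}{q} = \prod_{j=1}^\infty\left(\sum_{m=0}^\infty z^{jm}\,\qbin{a_j-1+m}{m}{q}\right). $$

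First I would justify this factorization as an identity of formal power series in $z$: each factor on the right has constant term $1$ (the $m=0$ term), and only the factors with index $j\leq n$ can affect the coefficient of $z^n$, so the infinite product is well defined and expanding it recovers exactly the partition sum in each degree. Next I would evaluate the inner sums in closed form. Writing the Gaussian binomial coefficient with its general (possibly non-integer) upper argument as $\qbin{a_j-1+m}{m}{q}=\dfrac{(q^{a_j};q)_m}{(q;q)_m}$, each inner sum is an instance of the $q$-binomial theorem,
$$ \sum_{m=0}^\infty \frac{(\alpha;q)_m}{(q;q)_m}\,x^m = \frac{(\alpha x;q)_\infty}{(x;q)_\infty}, $$
taken with $\alpha=q^{a_j}$ and $x=z^j$. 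This produces $\dfrac{(q^{a_j}z^j;q)_\infty}{(z^j;q)_\infty}$, which equals $\dfrac{1}{(z^j;q)_{a_j}}$ once one uses the analytic definition $(x;q)_{a}=(x;q)_\infty/(xq^{a};q)_\infty$ of the $q$-Pochhammer symbol for real $a\geq 0$. Taking the product over $j$ then yields $\prod_{n\geq 1}(z^n;q)_{a_n}^{-1}$, as desired.

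The main obstacle is the bookkeeping around non-integer and boundary values of $a_j$: to apply the $q$-binomial theorem in the stated Pochhammer form I must invoke the general definitions of both $\qbin{a_j-1+m}{m}{q}$ and $(z^j;q)_{a_j}$ valid for real $a_j\geq 0$ and check that they are mutually consistent (the limit $q\to 1$ recorded earlier in this section is a useful sanity check). I would also record the degenerate case $a_j=0$, where $(q^{a_j};q)_m=(1;q)_m$ kills every term with $m\geq 1$, leaving the factor equal to $1=1/(z^j;q)_0$ in agreement with the product. For $\lvert q\rvert<1$ and $\lvert z\rvert$ sufficiently small all series converge, giving the analytic identity, while the formal power series reading makes the factorization step rigorous without recourse to analytic estimates.
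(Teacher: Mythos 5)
Your proposal is correct and follows essentially the same route as the paper: both arguments factor the partition sum \eqref{gf_identity_sum_over_partitions} over part multiplicities into the product of inner series $\sum_{m\geq 0} z^{jm}\qbin{a_j-1+m}{m}{q}$ and then evaluate each factor as $1/(z^j;q)_{a_j}$, the paper by quoting the identity $\sum_{k\geq 0}(qz)^k\qbin{a-1+k}{k}{q}=1/(qz;q)_a$ and you by deriving it from the $q$-binomial theorem. Your additional care with the formal power series justification, the extended definitions for real $a_j\geq 0$, and the degenerate case $a_j=0$ is welcome but does not change the substance of the argument.
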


\begin{proof}
Expanding the product
\begin{equation}\label{expand} \prod_{n=1}^\infty \left(1 + \qbin{a_n - 1 + 1}{1}q z^n + \qbin{a_n - 1 + 2}{2}q z^{2n} + \qbin{a_n - 1 + 3}{3}q z^{3n} + \cdots \right),\end{equation}
we obtain the right hand side of (\ref{gf_identity_sum_over_partitions}). Now, using the well-known $q$-series identity
$$\sum_{k=0}^\infty (qz)^k \qbin{a - 1 + k}{k}q  = \frac{1}{(qz;q)_{a}} $$
with $a\geq 0$, we make the substitutions $z \mapsto z^{a_n}q^{-1}$ and $a \mapsto a_n\geq 0$,  take the product over $n\geq 1$, then compare with \eqref{expand}, to arrive at the result.
\end{proof}

\section{Open questions}
We close with some open questions and suggestions for future related work.
\begin{enumerate}
  \item Can one impose conditions on the $r(n)$ that would guarantee the $a_n$ are nonnegative, so that the $r(n)$ might be explicitly counting something?
 \item Can one impose conditions on the $r(n)$ so that the $a_n$ are periodic with respect to a fixed modulus, as in the product side of Rogers--Ramanujan type identities?
 \item Can one impose conditions on the $a_n$ that  guarantee the $r(n)$ exponentially, subexponentially, or at specified rates?
  \item Is there a natural combinatorial interpretation for the $q$-analogue   $r_q(n)$?
  \item Does Theorem \ref{analogue} (or a generalization) hold for $a_n<0$ under the definition of the $q$-Pochhammer symbol for negative indices, viz. $(a;q)_{-n}:=(aq^{-n};q)_n^{-1}$?  
\end{enumerate}

\section*{Acknowledgment}
The authors wish to thank the anonymous referee for helpful suggestions that guided us to
improve the manuscript.

\end{document}